\newenvironment{pf}[1][Proof]{\begin{proof}[\textnormal{\textbf{#1}}]}{\end{proof}}
\DeclareMathOperator{\td}{td}  
\newcommand{\N}{\ensuremath{\mathbb{N}}}
\newcommand{\Z}{\ensuremath{\mathbb{Z}}}
\newcommand{\Q}{\ensuremath{\mathbb{Q}}}
\newcommand{\C}{\ensuremath{\mathbb{C}}}
\renewcommand{\L}{\ensuremath{\mathcal{L}}} 
\renewcommand{\phi}{\varphi}
\newcommand{\into}{\hookrightarrow}
\newcommand{\nstrong}{\ensuremath{\not\kern-4pt\lhd\;}} 
\newbox\noforkbox \newdimen\forklinewidth
\noforkbox\hbox{\lower 2pt\box1\lower
2pt\box0\relax}
\def\unionstick{\mathop{\copy\noforkbox}\limits}
\def\nonfork_#1{\unionstick_{\textstyle #1}}
\newbox\doesforkbox
\doesforkbox\hbox{\lower 2pt\box1 \lower
2pt\box2\lower2pt\box0\relax}
\def\nunionstick{\mathop{\copy\doesforkbox}\limits}
\def\fork_#1{\nunionstick_{\textstyle #1}}
\DeclareMathOperator{\height}{height}
\newtheorem{prop}{Proposition}
\newtheorem{cor}[prop]{Corollary}
\newtheorem{theorem}[prop]{Theorem}
\newtheorem{lemma}[prop]{Lemma}
\newtheorem{axiom}[prop]{Axiom}
\newtheorem{remark}[prop]{Remark}
\theoremstyle{definition}
\newtheorem{defn}[prop]{Definition}
\newtheorem{example}[prop]{Example}
\newcommand{\mK}{\mathcal{K}}
\newcommand{\x}{\bar{x}}
\newcommand{\ba}{\bar{a}}
\newcommand{\bb}{\bar{b}}
\newcommand{\bh}{\bar{h}}
\newcommand{\z}{\bar{z}}
\newcommand{\bu}{\bar{u}}
\newcommand{\bv}{\bar{v}}
\newcommand{\y}{\bar{y}}
\newcommand{\br}{\bar{r}}
\newcommand{\bw}{\bar{w}}
\newcommand{\mF}{\mathcal{F}}
\newcommand{\then}{\Rightarrow}
\DeclareMathOperator{\fib}{fib}
\title{Henson and Rubel's Theorem for Zilber's Pseudoexponentiation}
\author{Ahuva C. Shkop}
\begin{document}

\maketitle
\begin{abstract}
\noindent In $1984$, Henson and Rubel (\cite{H-R}) proved the following theorem: If $p(x_1,...,x_n)$ is an exponential polynomial with
coefficients in $\C$ with no zeroes in $\C$, then $p(x_1,...,x_n)= e^{g(x_1,...,x_n)}$ for some exponential polynomial
$g(x_1,...,x_n)$ over $\C$. In this paper, I will prove the analog of this theorem for Zilber's Pseudoexponentiation directly from the axioms.  Furthermore, this proof relies only on the existential closedness axiom without any reference to Schanuel's conjecture.
\end{abstract}

\section{Introduction}

In \cite{P-exp}, Zilber constructed an exponential field, Zilber's Pseudoexponentiation, of size continuum that satisfies many special properties. Schanuel's conjecture is true in this field and every definable set is countable or co-countable (quasiminimality). It is still unknown whether Pseudoexponentiation is isomorphic to complex exponentiation.

In \cite{H-R}, Henson and Rubel prove that the only exponential polynomials with no zeros are of the form $\exp(g)$ where $g$ is some exponential polynomial.  Although this seems to be a question in exponential algebra, this proof uses Nevanlinna theory.

The goal of this paper is to prove the following theorem:

\begin{theorem}\label{Thm1} Let $p(x_1,...,x_n)$ be an exponential polynomial with coefficients in Zilber's Pseudoexponentiation $\mK$. If $p \neq \exp(g(x_1,...,x_n))$ for any exponential polynomial $g(x_1,...,x_n)$, then $p$ has a root in $\mK$.

\end{theorem}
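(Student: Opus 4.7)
The plan is to translate $p(\x) = 0$ into a polynomial-exponential system, encode it as an algebraic variety $V \subseteq \ga^N \times \gm^N$ for a suitable $N$, and apply Zilber's existential closedness axiom (EC) to $V$. The bulk of the work is in arranging that $V$ is free and rotund in Zilber's sense, and this is precisely where the hypothesis $p \neq \exp(g)$ must be used.

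First I would unfold $p$: write $p(\x) = P(\x, e^{t_1(\x)}, \ldots, e^{t_m(\x)})$ where $P \in \mK[\x, \y]$ is a polynomial and the exponential arguments $t_i$ are organized so that each $t_i$ arises from $\x$ and the earlier $e^{t_j}$ through a polynomial identity $t_i = s_i(\x, e^{t_1}, \ldots, e^{t_{i-1}})$. Introducing auxiliary variables $w_i$ (for $t_i$) and $y_i$ (for $e^{t_i}$), the equation $p(\x) = 0$ becomes the system
\[
P(\x, \y) = 0, \qquad w_i = s_i(\x, y_1, \ldots, y_{i-1}), \qquad y_i = \exp(w_i), \quad i = 1, \ldots, m.
\]
By factoring $P$ into irreducibles (treating each factor separately) and dividing out any $\y$-monomial factor of $P$ (which only splits an $\exp(\cdot)$ out of $p$ and therefore preserves the hypothesis), I may assume $P$ is irreducible in $\mK[\x, \y]$ and not divisible by any $y_i$. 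Set $N = n + m$ and let $V \subseteq \ga^N \times \gm^N$ be the variety with additive coordinates $(\x, \bw)$ and multiplicative coordinates $(\bu, \y)$ defined by the first two equations above, with the $\bu$ coordinates left unconstrained (they are intended to realize $\exp(\x)$). By construction, any EC point $(\z, e^{\z}) \in V$ yields a root of $p$ in its first $n$ coordinates.

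The main technical step is verifying freeness. Suppose a nontrivial multiplicative relation $\prod u_i^{a_i}\prod y_j^{b_j} = c$ held on $V$; since $\bu$ is unconstrained, necessarily $a_i = 0$, so the relation reduces to $\prod y_j^{b_j} = c$ on $\{P = 0\}$. Irreducibility of $P$ together with a Nullstellensatz argument forces $P$ to be independent of $\x$ and, up to a scalar, equal to $\prod y_j^{b_j'} - c'$ for some primitive $(b_j')$; substituting, $p(\x) = \alpha(e^{g(\x)} - c')$ with $g(\x) = \sum b_j' t_j(\x)$. If $c' = 0$ then $p = \alpha e^{g} = \exp(g + \log \alpha)$, contradicting $p \neq \exp(g)$; otherwise, writing $c' = e^a$ via surjectivity of $\exp$, the equation $p(\x) = 0$ reduces to $g(\x) - a \in 2\pi i\Z$, an exponential polynomial equation in which $g$ has strictly smaller exponential depth than $p$, so an induction on depth closes this case. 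An analogous analysis, exploiting the polynomial identities $w_i = s_i(\x, y_1, \ldots, y_{i-1})$, handles a nontrivial additive relation $\sum c_i x_i + \sum d_j w_j = e$ on $V$.

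Rotundness should then follow from the irreducibility of $P$ and its non-monomial character: any projection of $V$ to a coordinate subspace $\ga^k \times \gm^k$ dropping below the expected dimension would witness exactly the sort of coset relation ruled out in the freeness analysis. Once $V$ is confirmed free and rotund (for $n > 1$ a dimension reduction or a version of EC applicable to varieties of dimension $\geq N$ is invoked at this point), EC produces a point $(\z, \exp(\z)) \in V$, completing the proof. The main obstacle I anticipate is the freeness step — specifically, the bookkeeping needed to convert an arbitrary subgroup-coset relation on $V$ into a canonical $\exp(g)$-expression for $p$, together with the induction on exponential depth that cleans up the $c' \neq 0$ subcase.
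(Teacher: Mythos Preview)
Your overall architecture matches the paper's: encode $p(\x)=0$ as a subvariety of $\ga^N\times\gm^N$, pass to an irreducible factor of $P$, establish freeness (reducing height when it fails), establish rotundity, and invoke EC. Your multiplicative-freeness analysis and the height drop in the $p=\alpha(e^g-c')$ case are essentially the paper's argument. Two genuine gaps remain, and they share a common cause.

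\textbf{Additive freeness is not analogous to the multiplicative case.} Substituting $w_i=s_i(\x,\y)$ into a putative relation $\sum c_ix_i+\sum d_jw_j=e$ yields a polynomial $Q(\x,\y)$ vanishing on $\{P=0\}$, but nothing forces $P\mid Q$ to put $P$ into a special form, and $Q$ may simply vanish identically. Concretely, if your list of arguments happens to contain $t_1$ and $t_2=2t_1+7$, then $w_2-2w_1=7$ holds on $V$ regardless of $P$, so $V$ is not free and no reduction on $p$ is available. The paper fixes this \emph{before} building the variety: it replaces the raw list of iterated arguments by a \emph{refined decomposition} $T_0$, a minimal set of ``bricks'' (including the $x_i$ themselves) that is $\Q$-linearly independent over $K$. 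With that normalization, no additive coset relation can hold on $V_p$ at all, and freeness reduces exactly to the multiplicative case you treated.

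\textbf{Rotundity is not just freeness for subtori.} The condition is $\dim[C](V)\ge r$ for every rank-$r$ integer matrix $C$, and a failure does not hand you a coset containing $V$. The paper's proof is a fiber-dimension computation: for generic $(\ba,\bb)\in[C](V_p)$ one must show $\dim[C]^{-1}(\ba,\bb)<\dim V_p - r$. Projecting out $\bw$ as you do, the fiber sits inside the locus $V_2$ cut by the $r$ torus equations $\prod y_j^{c_{ij}}=b_i$; the point is that the corresponding $r$ \emph{additive} equations, after substituting $w_i=p_i(\x,\y)$, impose a further nontrivial constraint on a generic point of $V_2$. That nontriviality is again exactly the $\Q$-linear independence of the bricks over $K$: the relevant $\Z$-combination $\sum d_j t_j$ is not a constant, hence the substituted linear equation is a nonconstant polynomial in the algebraically independent coordinates of the generic point. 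So your missing ingredient for both gaps is the same preprocessing step --- arrange the iterated exponential arguments to be $\Q$-linearly independent over $K$ before defining $V$.
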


 D'Aquino, Macintyre, and Terzo have also explored this problem and offer an alternate proof of this theorem in \cite{MDT}.  We will use purely algebraic techniques and give a proof directly from the axioms.  This proof uses only basic exponential algebra and is entirely independent of Schanuel's conjecture.

\bigskip

We will begin with the following definitions.

\begin{defn}
In this paper, a \emph{(total) E-ring} is a $\Q$-algebra $R$ with
  no zero divisors, together with a homomorphism
  $\exp: \langle R,+ \rangle \to \langle R^*,\cdot \rangle$.

  A \emph{partial E-ring} is a $\Q$-algebra $R$ with no zero
  divisors, together with a $\Q$-linear subspace $A(R)$ of $R$ and a
  homomorphism $\exp: \langle A(R),+ \rangle \to \langle R^*,\cdot \rangle$. $A(R)$ is then the domain of $\exp$.

  An  \emph{E-field} is an E-ring which is a field.

  We say \emph{$S$ is a partial E-ring extension of $R$} if $R$ and $S$ are partial E-ring, $R\subset S$, and for all $r\in A(R)$,
  $\exp_S(r)=\exp_R(r)$.

\end{defn}

We now set some conventions. Let $K$ be any algebraically closed field and $\alpha \in \N$. Throughout this paper, a \emph{variety} is a (possibly reducible) Zariski closed subset of $G_\alpha (K) := K^\alpha \times (K^*)^\alpha$ or some projection of $G_\alpha (K)$. We will use the notation $\y$ for a finite tuple $y_1,...,y_m$,
and we will write $\exp(\y)$ instead of $\exp(y_1),...,\exp(y_n)$.  Similarly for a subset $S$ of an E-ring, $\exp(S)$ is
the exponential image of $S$.  We write $\td_A(\bb)$ to be the transcendence degree of the field generated by $(A,\bb)$ over the field generated by $A$.

\smallskip
To prove Theorem~\ref{Thm1}, we recall that Zilber's field, which we will call $\mK$, satisfies the following axiom:

\begin{axiom} \label{EC} If a variety $V\subseteq G_{\alpha}(\mK)$ is irreducible, rotund, and free, then there are infinitely many $\x$ such that
$(\x,\exp(\x))\in V$.
\end{axiom}
\bigskip

\noindent The definitions of a \begin{em}rotund\end{em} variety and a \begin{em}free\end{em} variety will be given later in the paper. The outline of the proof is as follows:

\begin{enumerate}
\item Given an exponential polynomial $p(\x)$, we construct a variety $V_p$ satisfying $$\forall
    \ba (\exists \bb (\ba,\bb,\exp(\ba),\exp(\bb))\in V_p \iff p(\ba)=0).$$
\item We reduce to the case where $V_p$ is irreducible and free.
\item We prove that if $p(\x)\neq \exp(g(\x))$, then $V_p$ is rotund.
\end{enumerate}

\section{Constructing $V_p$}

\noindent Recall the following construction of $K[X]^E$, the exponential polynomial ring over an E-field $K$ on the set of
indeterminates $X$: (see \cite{Lou E-rings},\cite{Angus E-rings})

\bigskip

If $R$ is a partial E-ring, we can construct $R'$, a partial E-ring extension of $R$, with the following properties:
\begin{itemize}
\item The domain of the exponential map in $R'$ is precisely $R$.
\item If for $i=1,...,n$, $y_i \notin A(R)$, then $\td_R(\exp_{R'}(\y))$ in $R'$ will be exactly the $\Q$-linear dimension of $\y$ over
    $A(R)$.
\item $R'$ is generated as a ring by $R\cup \exp(R)$.
\end{itemize}

For $K$ an E-field and $X$ a set of indeterminates, let $K[X]$ be the partial E-ring where $A(K[X])= K$. Then the exponential polynomial ring over $K$, $K[X]^E$,  is simply the union of the chain

 \[ K[X]= R_0 \into R_1 \into R_2 \into R_3 \into R_4  \into \cdots \]

where $R_{n+1} = R_n'$.

\medskip

\noindent This construction yields a natural notion of height.

\begin{defn}
For $p$ an exponential polynomial and $n \in \N$, the \emph{$\height(p)=n$} if and only if $p \in R_n$ and $p \notin
R_{n-1}$.
\end{defn}

\begin{example}
The exponential polynomial $p(x_1,x_2)= \exp(\exp(\frac{x_1}{2} + x_2^2)) + x_1^3$ in $\C[x_1,x_2]^E$  has
height $2$.

\end{example}

\noindent We now have the background necessary to begin the construction of $V_p$.

\bigskip
\bigskip

Let $K$ be an algebraically closed E-field of characteristic $0$ and $p$ an exponential polynomial with coefficients in $K$.

\begin{defn} We will call a set $T$ of exponential polynomials a \emph{decomposition of $p$} if it is a minimal set of
exponential polynomials such that:

\begin{itemize}
\item $\exists t_1,...,t_k \in T : p \in K[\x,\exp(t_1),...,\exp(t_k)]$, the subring of $K[\x]^E$ generated by $\x, \exp(t_1),...,\exp(t_k).$
\item $t_i \in T \then \exists t_1,...,t_l \in T : t_i \in K[\x,\exp(t_1),...,\exp(t_l)].$
\item There is an $L \in \Z^*$ such that $\frac{x_1}{L},...,\frac{x_n}{L}$ are in $T$.
\end{itemize}

\end{defn}

\noindent We will call elements of T \emph{T-bricks}.
\medskip

Consider the parallel between exponential polynomials and terms in the language $\L = \{+,-,\cdot, 0,1, \exp\} \cup \{c_k : k \in K\}$.  This parallel extends to subterms and T-bricks.  Considering this parallel, notice that every T-brick can be written as a polynomial in $\x$ and the exponential image of the T-bricks of lower height.  Furthermore, all decompositions are finite. To satisfy the third bullet consider the following: While there are several terms which correspond to the same polynomial, we can choose one such term and take the least common multiple of the denominators of the rational coefficients of all the elements of $\x$ which appear in the term.

\begin{example}
Consider $p(x_1, x_2)= \exp(\exp(\frac{x_1}{2}+ x_2^2)) +x_1^3$. Then $T = \{\frac{x_1}{2}, \frac{x_2}{2}, x_2^2, \exp(\frac{x_1}{2}+
x_2^2)\}$ is a decomposition of $p$. Notice that $\frac{x_1}{2}+ x_2^2$ is not in the decomposition since
$\exp(\frac{x_1}{2}+ x_2^2) = \exp(\frac{x_1}{2})\exp(x_2^2)$. We need $\frac{x_2}{2}$ in the decomposition to satisfy the third bullet. \end{example}

\medskip

\begin{defn}  We say that a decomposition $T$ is a \emph{refined decomposition} if $T$ is $\Q-$linearly independent over
$K$.
\end{defn}

\begin{lemma}  Given a decomposition $T$, we can form a refined decomposition $T'$.
\end{lemma}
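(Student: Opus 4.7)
The plan is to reduce the problem to elementary $\Q$-linear algebra on $T$ modulo $K$. Let $V \subseteq K[\x]^E$ be the $\Q$-vector subspace spanned by $T$, and set $V_0 := V \cap K$. The quotient $V/V_0$ is then a finite-dimensional $\Q$-vector space spanned by the image of $T$. Choose a subset $S \subseteq T$ whose images in $V/V_0$ form a $\Q$-basis, building $S$ inductively along the height filtration: at each height $h$, extend a chosen basis of $V_h/(V_h \cap K)$ to one of $V_{h+1}/(V_{h+1} \cap K)$, where $V_h$ is the $\Q$-span of the bricks of height $\le h$. At the height-$0$ step, include $x_1/L, \ldots, x_n/L$, which are $\Q$-linearly independent modulo $K$.

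For each $t \in T$, write $t = c_t + \sum_{s \in S} q_{t,s}\, s$ uniquely with $c_t \in K$ and $q_{t,s} \in \Q$; by the height-compatible choice, $q_{t,s} = 0$ whenever $h(s) > h(t)$. Choose a positive integer $N$ with $N q_{t,s} \in \Z$ for every $t$ and $s$, and set $T' := \{s/N : s \in S\}$. The set $T'$ is still $\Q$-linearly independent modulo $K$ because $s \mapsto s/N$ is a $\Q$-linear automorphism of $V$. Moreover,
\[ \exp(t) \;=\; \exp(c_t) \prod_{s \in S} \exp(s/N)^{N q_{t,s}} \;\in\; K[\exp(T')], \]
now with integer exponents. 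This yields $K[\x, \exp(T)] \subseteq K[\x, \exp(T')]$, hence bullet~1. Bullet~2 follows by applying the same substitution inside the expression of each $s \in S$ as a polynomial in $\x$ and $\exp(t_i)$ for lower-height $t_i \in T$; height-compatibility guarantees the $s' \in S$ that appear all satisfy $h(s') \le h(t_i) < h(s)$, so $s/N$ is expressed via $\exp(t'')$ for $t'' \in T'$ strictly below it. Bullet~3 holds with $L$ replaced by $NL$, since $x_j/(NL) = (x_j/L)/N \in T'$. If $T'$ fails to be minimal, pass to a minimal sub-decomposition, which still inherits $\Q$-linear independence modulo $K$.

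The main obstacle is the interaction of fractional $\Q$-linear combinations with $\exp$: one cannot apply $\exp$ directly to a combination with non-integer coefficients, which is why merely choosing a $\Q$-basis of $V/V_0$ inside $T$ does not suffice and one must rescale to $\{s/N : s \in S\}$ in order to turn rational combinations into integer monomials in the exponentials. The only other technical care needed is the height-compatible construction of $S$, so that the chain of expressions required by bullet~2 is preserved.
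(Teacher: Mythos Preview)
Your argument is correct and uses the same core mechanism as the paper---eliminate $\Q$-linear dependencies modulo $K$ and rescale bricks so that the resulting rational coefficients become integer exponents on the exponentials---but the organization is genuinely different. The paper proceeds by induction on $|T|$: it locates one dependent brick $t=a+\sum_{i\in I}(a_i/b_i)t_i$, replaces each $t_i$ involved by $t_i/L$ with $L=\mathrm{lcm}(b_i)$ so that $\exp(t)$ becomes a monomial in the new generators, drops $t$, and recurses on the strictly smaller set. You instead make a single global pass: choose a height-compatible $S\subseteq T$ whose image is a $\Q$-basis of the span of $T$ modulo $K$, clear all denominators simultaneously with one integer $N$, and set $T'=\{s/N:s\in S\}$. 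Your route makes explicit the two ingredients that the inductive proof leaves implicit---the linear-algebraic extraction of a basis, and the height-compatibility needed to preserve bullet~2---at the cost of a longer setup; the paper's version is terser but does not spell out why bullet~2 survives each step or how the successive rescalings compound. Both arguments share the same small imprecision: the integer exponents $Nq_{t,s}$ (in yours) or $a_iL/b_i$ (in the paper's) can be negative, so strictly speaking $\exp(t)$ lands among Laurent polynomials in $\exp(T')$ rather than in $K[\x,\exp(T')]$; since the ambient construction of $V_p$ places $\y$ in $(K^*)^\alpha$ this is harmless downstream, and neither proof addresses it.
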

\begin{proof} We induct on the size of $T$. Clearly, if the decomposition is empty, it is refined.  Suppose T is not refined, and $|T| = m$ and assume
the claim for decompositions of size less than $m$. Suppose $t\in T$ is a $\Q-$linear combination over $K$ of other $T$-bricks. That is, for all $i\in I\subset \{1,...,m\}$, $t \neq t_i$ and $$t = a + \sum_{i\in I}
\frac{a_i}{b_i}t_i$$ for some $a_i,b_i \in \Z, b_i \neq 0,a \in K $, and the least common multiple of the $b_i$ is $L$. ($L > 1$ since otherwise $T$ is not minimal and thus not a decomposition.)
Then after replacing each $t_i, i\in I$ with  $\frac{1}{L}t_i$, this set will contain a decomposition of p. $\exp(t)$ is now a polynomial in the variables $\exp(\frac{1}{L}t_i)$ So
 $\hat{T} = (T \cup \{ \frac{t_i}{L} : i\in I\}) \backslash (\{t\}\cup \{t_i: i \in I\})$ contains a smaller decomposition and by induction, we can find a refined decomposition $T'$ of $p$.

\end{proof}
\smallskip

\begin{remark}
To simplify notation, let $\x' = (\frac{x_1}{L},...,\frac{x_n}{L})$.  By making this invertible change of variables, we may and do assume $L=1$.
\end{remark}

We now set $T_0$ to be a refined decomposition of $p$, and $\alpha = |T_0|$.  Furthermore, we order the $T_0$-bricks in
order of height, i.e., $\height(t_i) \leq \height(t_j)$ for $i \leq j $. For convenience, we let the first $n$
elements of $T_0$ be $x_1,...,x_n$.  So $T_0= \{\x\} \cup \{t_i: n+1 \leq i \leq \alpha \}$.

\medskip

We now name the polynomials which witness $T_0$ being a decomposition.

\noindent For each $ n+1< i < \alpha $, let $p_i \in K[\x,y_1,...,y_{i-1}]$ be such that  $$p_i(\x,\exp(\x),\exp(t_{n+1}),...,\exp(t_{i-1}))=t_i$$
Let $p^* \in K[\x,\y]$ be such that $p^*(\x,\exp(\x,t_{n+1},...,t_\alpha))=p$.

\noindent Let $V_p \subseteq G_\alpha = K^\alpha \times (K^*)^\alpha$ be the variety given as follows:

$$(x_1,...,x_n,w_{n+1},...,w_\alpha,y_1,...,y_\alpha)=(\x,\bw,\y)\in V_p$$ $$ \iff (\bigwedge_{i=n+1}^\alpha w_i = p_i(\x,\y)) \wedge p^*(\x,\y)= 0$$

Please note the indexing. We will maintain this indexing for coordinates of points in the variety as well.
\bigskip

\begin{prop} For any $\ba \in K$
$$\exists \bb ( (\ba,\bb,\exp(\ba),\exp(\bb)) \in V_p ) \iff p(\ba)=0$$
\end{prop}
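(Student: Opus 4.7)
The proposition asserts that $V_p$ was constructed to encode zeros of $p$; the plan is to verify both directions directly by unwinding the construction, with the only mildly non-trivial ingredient being a short induction along the height ordering of the $T_0$-bricks in the forward direction.

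For the $(\Leftarrow)$ direction, given $\ba$ with $p(\ba) = 0$, I would set $b_i := t_i(\ba)$ for $n+1 \le i \le \alpha$ (where $t_i(\ba)$ denotes the evaluation of the exponential polynomial $t_i$ at $\ba$, which is well-defined since $K$ is an E-field) and let $\bb = (b_{n+1},\ldots,b_\alpha)$. Each equation $b_i = p_i(\ba,\exp(\ba),\exp(\bb))$ then follows from the defining identity $p_i(\x,\exp(\x),\exp(t_{n+1}),\ldots,\exp(t_{i-1})) = t_i$ evaluated at $\x = \ba$, using that $\exp(b_j) = \exp(t_j(\ba))$ by construction. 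Likewise, $p^*(\ba,\exp(\ba),\exp(\bb)) = p(\ba) = 0$ by the defining property of $p^*$, so $(\ba,\bb,\exp(\ba),\exp(\bb)) \in V_p$.

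For the $(\Rightarrow)$ direction, suppose $(\ba,\bb,\exp(\ba),\exp(\bb)) \in V_p$ with $\bb = (b_{n+1},\ldots,b_\alpha)$. I would show by induction on $i$ from $n+1$ to $\alpha$ that $b_i = t_i(\ba)$. The height ordering of $T_0$ guarantees $p_i \in K[\x,y_1,\ldots,y_{i-1}]$, so the inductive hypothesis $b_j = t_j(\ba)$ for $n+1 \le j < i$ (giving $\exp(b_j) = \exp(t_j(\ba))$) combines with the variety equation $b_i = p_i(\ba,\exp(\ba),\exp(\bb))$ and the defining identity of $p_i$ to yield $b_i = t_i(\ba)$. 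Once all the $b_i$ are identified, the remaining variety equation $p^*(\ba,\exp(\ba),\exp(\bb)) = 0$ reduces to $p(\ba) = 0$ by the defining property of $p^*$. The only real subtlety is keeping straight the three different indexings (of $T_0$-bricks, of the polynomials $p_i$, and of the coordinates of points in $V_p$); I expect no serious obstacle beyond careful bookkeeping.
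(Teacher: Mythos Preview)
Your proposal is correct and follows essentially the same approach as the paper: both arguments unwind the definition of $V_p$ to see that the equations $w_i = p_i(\x,\y)$ force $b_i = t_i(\ba)$, after which $p^*(\ba,\exp(\ba),\exp(\bb)) = p(\ba)$. Your version is simply more explicit, spelling out the induction on $i$ that the paper compresses into a single biconditional step.
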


\begin{proof}

$$\exists \bb ( (\ba,\bb,\exp(\ba),\exp(\bb))\in V_p)\iff$$ $$ \exists \bb ( \forall b_i \in \bb (b_i=
p_i(\ba,\exp(\x'),\exp(b_{n+1},...,b_{i-1}))) \wedge p^*(\ba,\exp(\ba),\exp(\bb))=0)$$ $$\iff \exists \bb ( \forall b_i \in \bb
(b_i = t_i(\ba)) \wedge p^*(\ba,\exp(\ba), \exp(\bb)) =0 )$$

Since $p^*(\ba,\exp(\ba),\exp(t_{n+1}(\ba)),...,\exp(t_\alpha(\ba)))= p(\ba)$, this is if and only if $p(\ba) =0 $.

\end{proof}

\noindent This concludes the general construction of $V_p$.

We now fix an algebraically closed E-field $K$ of characteristic $0$ whose exponential map is surjective with infinite kernel, and an exponential polynomial $p(\x)=p(x_1,...,x_n)$ with coefficients in $K$ of height at least 1.  Since the only polynomials with no zeros are constant and non-zero, theorem \ref{Thm1} is clearly true for polynomials.

Notice that if $p=\exp(g(\x))$, then $p^* = \prod y_i^{a_i}$ and $ \prod y_i^{a_i} =0$ is one of the defining equations of $V_p$ and $V_p$ is empty. So for the remainder of the paper, we assume $p \neq \exp(g)$ for any exponential
polynomial $g$. Furthermore, we have set $T_0$ to be the refined decomposition which gave us $V_p$. To prove Theorem \ref{Thm1}, we now need to show that we can reduce to the case where $V_p$ is irreducible and free. This is a necessary step to use axiom \ref{EC} for Zilber's field.

\section{Irreducibility and Freeness}

We now reduce to the case where $V_p$ is irreducible and free.  These reductions involve two inductive procedures on $p$.  One decreases the height and the other does not increase the height so this process will terminate.

\begin{defn} An exponential polynomial $p$ is \begin{em}irreducible with respect to a decomposition T\end{em}, if there are
no nonconstant exponential polynomials $q_1,q_2$ such that

\begin{itemize}
\item T contains decompositions for $q_1,q_2$
\item $p=q_1q_2$
\end{itemize}
\end{defn}

When T is a refined decompostion of $p$, this is equivalent to demanding that $p$ be irreducible as a polynomial in the polynomial ring $K[\x,\exp(T)]$ . Note that $(\x,\exp(T))$ is algebraically independent over $K$ in the exponential polynomial ring by construction so $K[\x,\exp(T)]$ is isomorphic to a polynomial ring.  As this ring is a unique factorization domain, $p$ can be written as a product of nontrivial irreducibles, say $\{q_j\}$.  If each factor $q_j$ is equal to $\exp{g_j}$ for some exponential polynomial
$g_j$, then $p=\exp(\sum g_j)$. So $p\neq \exp(g)$ for any exponential polynomial $g$ implies that there is an irreducible factor of $p$, say $q_j$ such that $q_j \neq \exp(\hat{g})$ for any exponential polynomial $\hat{g}$. Furthermore, if $g_j$ has a root, then $p$ has a root, $T_0$ contains a refined decomposition $T_1$ of $q_j$, and $q_j$ is clearly irreducible with respect to $T_1$. So to prove Theorem 1, we can assume that $p$ is
irreducible with respect to $T_0$.  It is also clear that if $p$ is irreducible with respect to $T_0$, $p^*$ is an
irreducible polynomial.

\begin{lemma}\label{irreducible lemma}
If $p$ is irreducible with respect to $T_0$, then $V_p$ is irreducible.
\end{lemma}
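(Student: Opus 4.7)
The plan is to identify $V_p$ with a hypersurface cut out by $p^*$ and then invoke the irreducibility of $p^*$ noted just before the lemma.

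First I would observe that the equations $w_i = p_i(\x, \y)$ in the definition of $V_p$ express each $w_i$ as a polynomial in $(\x, \y)$. Consequently, the projection $\pi : G_\alpha \to K^n \times (K^*)^\alpha$ dropping the $\bw$-coordinates restricts to a biregular isomorphism from $V_p$ onto
\[
W := \{(\x, \y) \in K^n \times (K^*)^\alpha : p^*(\x, \y) = 0\},
\]
with polynomial inverse $(\x, \y) \mapsto (\x, p_{n+1}(\x, \y), \ldots, p_\alpha(\x, \y), \y)$. So it suffices to prove that $W$ is irreducible.

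Since $p^*$ is irreducible in $K[\x, \y]$ and $K$ is algebraically closed, the affine hypersurface $Z := Z(p^*) \subseteq K^{n+\alpha}$ is irreducible, and $W = Z \cap U$ where $U = K^n \times (K^*)^\alpha$ is the principal open subset $\{\prod_{i=1}^{\alpha} y_i \neq 0\}$. The intersection of an irreducible variety with a nonempty open subset is irreducible, so the only remaining task is to verify that $W \neq \emptyset$.

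Nonemptiness is exactly where the assumption $p \neq \exp(g)$ enters. If $W$ were empty, then $Z \subseteq \bigcup_i \{y_i = 0\}$, and irreducibility of $Z$ would force $Z \subseteq \{y_i = 0\}$ for some $i$, giving $y_i \mid p^*$ and hence, by irreducibility of $p^*$, $p^* = c\, y_i$ for some $c \in K^*$. Unwinding the construction, this yields $p = c \exp(t_i)$; since $K$'s exponential is surjective, we may write $c = \exp(d)$, whence $p = \exp(t_i + d)$, contradicting our standing assumption. Thus $W$ is nonempty and irreducible, and so is $V_p$. I do not foresee any real obstacle: once one sees $V_p$ as a graph over $W$, the content is simply the irreducibility of $p^*$ (already established) together with the brief nonemptiness check just given.
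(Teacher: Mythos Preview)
Your argument is correct and follows the same route as the paper: both identify $V_p$, via the projection forgetting $\bw$, with the hypersurface $\{p^*=0\}$ in $K^n\times(K^*)^\alpha$ and then invoke the irreducibility of $p^*$. Your proof is in fact slightly more careful, since you explicitly verify that this hypersurface is nonempty in the open torus factor (using the standing assumption $p\neq\exp(g)$), a point the paper's proof passes over.
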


\begin{pf}
Consider the projection $\phi: V_p \to K^n \times (K^*)^{\alpha}$ where $$\phi(\x,\bw,\y) = (\x,\y)$$  This map is injective since
every element of $\bw$ is determined by $\x,\y$.  The inverse is given by the polynomial map $\phi^{-1}(\x,\y) =
(\x,p_{n+1}(\x,\y),...,p_{\alpha}(\x,\y),\y)$. Thus $V_p$ is isomorphic to the image of $\phi$. The image is defined by $p^*(\x,\y)=0$.  This is a hypersurface given by an irreducible polynomial and is thus irreducible. Since it is
isomorphic to an irreducible variety, $V_p$ is irreducible.

\end{pf}

\begin{defn}  A variety $V \subseteq G_{\alpha}(K)$ is \emph{free} if we cannot find $m_1,...,m_\alpha \in \Z$ and $b \in K$
such that $V$ is contained in either the variety $$\{(\bu,\bv): \prod_{i=1}^\alpha v_i^{m_i}=b\}$$ or
$$\{(\bu,\bv):\sum_{i=1}^\alpha m_iu_i=b\}.$$
\end{defn}

\begin{lemma} If $V_p$ is not free, then $p= \exp(g)-k$ for some exponential polynomial $g$ and some $k \in K$.
\end{lemma}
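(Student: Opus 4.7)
The plan is to analyze which kind of relation witnesses the non-freeness of $V_p$. By the previous lemma, $V_p$ is isomorphic to the hypersurface $V(p^*) \subseteq K^n \times (K^*)^\alpha$ via the projection $(\x,\bw,\y) \mapsto (\x,\y)$, so a non-freeness relation on $V_p$ translates to one on $V(p^*)$. First I would observe that $p^*$ is not divisible by any coordinate $y_i$: otherwise, since $p^*$ is irreducible, $p^* = \lambda y_i$ for some $\lambda \in K^*$, which via surjectivity of $\exp$ on $K$ yields $p = \lambda \exp(t_i) = \exp(g)$, contradicting our standing assumption. Consequently $V(p^*) \cap (K^*)^\alpha$ is dense in $V(p^*)$.

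In the additive case, some nontrivial relation $\sum_{i=1}^n m_i x_i + \sum_{j=n+1}^\alpha m_j w_j = b$ holds on $V_p$; substituting $w_j = p_j(\x,\y)$, the polynomial $G(\x,\y) := \sum m_i x_i + \sum m_j p_j(\x,\y) - b$ vanishes on $V(p^*)$, so $p^* \mid G$ by primality of $(p^*)$. But each $p_j$ lies in $K[\x, y_1, \ldots, y_{j-1}]$, so $G$ has degree $0$ in $y_\alpha$, while $p^*$ must involve $y_\alpha$ by minimality of $T_0$ (otherwise $T_0 \setminus \{t_\alpha\}$ would be a smaller decomposition of $p$). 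Hence $G = 0$, and substituting the exponentials back gives the identity $\sum m_i x_i + \sum m_j t_j = b$ in the exponential polynomial ring. Refinement of $T_0$ then forces all $m_i, m_j$ to vanish, contradicting nontriviality. So the additive case cannot occur.

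In the multiplicative case, some $\prod y_i^{m_i} = b$ holds on $V_p$ with $b \in K^*$, which means the binomial $F(\y) := \prod y_i^{m_i^+} - b \prod y_i^{m_i^-}$ vanishes on $V(p^*) \cap (K^*)^\alpha$, hence on all of $V(p^*)$ by density, giving $p^* \mid F$ in $K[\x,\y]$. Since $K$ is algebraically closed, setting $d = \gcd(m_i)$ and $n_i = m_i/d$, one factors $F = \prod_{k=0}^{d-1}\bigl(\prod y^{n^+} - c_k \prod y^{n^-}\bigr)$ as a product of irreducible binomials, where $c_k^d = b$. Thus $p^*$ is a scalar multiple of one such binomial, and translating back through $\exp$ together with surjectivity yields $p = \exp(h_+) - \exp(h_-)$ for exponential polynomials $h_\pm$ of the form $\mathrm{const} + \sum n_i^\pm x_i + \sum n_j^\pm t_j$. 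Rewriting $p = \exp(h_-)\bigl(\exp(h_+ - h_-) - 1\bigr)$: since $p$ is irreducible in the polynomial ring $K[\x, \exp(T_0)]$ and $\exp(h_-)$ is a monomial there, $\exp(h_-)$ must be a unit, forcing every $n_i^-$ and $n_j^-$ to vanish. Hence $h_-$ is a constant $\mu \in K$, and $p = \exp(h_+) - e^\mu$, giving the required $g = h_+$ and $k = e^\mu$. The main bookkeeping issue, I expect, will be this last step---carefully ensuring that irreducibility of $p$ in $K[\x, \exp(T_0)]$ compels one of the two monomial halves of the binomial $p^*$ to degenerate to a constant, so that the resulting $p$ genuinely has the shape $\exp(g) - k$.
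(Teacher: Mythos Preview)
Your approach coincides with the paper's: project via $\phi$ to the hypersurface $\{p^*=0\}$, rule out the additive case by $\Q$-linear independence of the $T_0$-bricks over $K$, and in the multiplicative case argue that $p^*$ is associated to an irreducible binomial. You are in fact more careful than the paper in two places---your additive argument is spelled out fully, and you correctly clear denominators to obtain the genuine polynomial $F=\prod y^{n^+}-c\prod y^{n^-}$ before invoking divisibility.

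Your worry about the last step is justified: that step does not go through. The factorization $p=\exp(h_-)\bigl(\exp(h_+-h_-)-1\bigr)$ is \emph{not} a factorization inside $K[\x,\exp(T_0)]$, since $\exp(h_+-h_-)=\prod_i\exp(t_i)^{n_i}$ has possibly negative exponents and so lies only in the Laurent extension; irreducibility of $p$ in $K[\x,\exp(T_0)]$ therefore says nothing about it, and one cannot force $h_-$ to be constant. Concretely, take $p=e^{x_1}-e^{x_2}$: here $T_0=\{x_1,x_2\}$, $p^*=y_1-y_2$ is irreducible, $V_p$ is not free (witnessed by $y_1y_2^{-1}=1$), yet $p$ is not of the form $\exp(g)-k$ for any $k\in K$. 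What your argument does establish is the slightly weaker conclusion $p=\exp(h_+)-\exp(h_-)$ with $\height(h_\pm)<\height(p)$, and this is all the subsequent lemma actually needs (take $p'=h_+-h_-$). The paper's own proof shares this gap: it asserts $p^*=k'\bigl(\prod y_i^{m_i}-b\bigr)$ with $k'\in K$, whereas in the Laurent ring one only gets $k'$ up to a monomial unit, and the example above shows the monomial need not be trivial.
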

\begin{pf}
Suppose $V_p$ is not free. Since we demanded that the $T_0$ bricks be $\Q$ - linearly independent over $K$, we cannot find
$m_1,...,m_\alpha \in \Z$ and $b \in K$ such that $V_p$ is contained in the variety $$\{(\x,\bw,\y):\sum_{i=1}^n m_ix_i +
\sum_{i=1}^{\alpha-n}m_{i+n}w_i =b\}.$$  Suppose $V_p$ is contained in the variety $W:=\{(\x,\bw,\y): \prod_{i=1}^\alpha
y_i^{m_i}=b\}$ for some $b\in K$.  Then, if $\phi$ is defined as in Lemma \ref{irreducible lemma}, consider $V_p^*= \phi(V_p)$ and $W=\{(\y): \prod_{i=1}^\alpha y_i^{m_i}=b\}$. As
stated above, $V_p^*$ is a hypersurface given by $p^*(\x,\y)=0$.  If $V_p \subseteq W$ then $V_{p}^*\subseteq W^*:= \phi(W)=
\{(\x,\y): \prod_{i=1}^\alpha y_i^{m_i}=b\}$.

\bigskip

If $W^*$ is reducible, then there is $m=gcd(\{m_1,...,m_\alpha\})\neq 1$ and $$\prod_{i=1}^\alpha y_i^{m_i}-b = (\prod_{i=1}^\alpha y_i^{\frac{m_i}{m}})^m - b$$ \noindent  Since $K$ is an algebraically closed field, it contains all the
roots of $b$, and thus the irreducible factors of $\prod_{i=1}^\alpha y_i^{m_i}-b$ are all of the form
$\prod_{i=1}^\alpha y_i^\frac{m_i}{m}-b_j$ so it suffices to prove the claim for $W^*$ irreducible.

\medskip

If $W^*$ is irreducible, then, $p^*$ must divide $\prod_{i=1}^\alpha y_i^{m_i} - b$.  Since $W^*$ is irreducible,
and $\prod_{i=1}^\alpha y_i^{m_i} - b$ is irreducible, we know that $p^* = k'(\prod_{i=1}^\alpha y_i^{m_i}- b)$ for some $k'\in K$.  From the construction of $p^*$, there is an exponential polynomial $g$ ($g = \sum m_it_i+log(k')$) such that    $p= \exp(g)-bk'$.

\end{pf}

\begin{lemma} If $V_p$ is not free, we can find an exponential polynomial $p'$ so that the $\height(p') < \height(p)$ and if $p'(\x)=0$ then $p(\x) = 0$.

\end{lemma}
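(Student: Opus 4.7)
The plan is to apply the previous lemma directly and then use surjectivity of the exponential on $K$ to peel off one layer of $\exp$.

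By the previous lemma, the hypothesis that $V_p$ is not free gives $p = \exp(g) - k$ for some exponential polynomial $g$ and some $k \in K$. Since we have standing assumption that $p \neq \exp(g')$ for any exponential polynomial $g'$, we must have $k \neq 0$. Now I would use that $K$ is an E-field whose exponential is surjective: choose $c \in K$ with $\exp(c) = k$, and define
\[
p'(\x) \leteq g(\x) - c.
\]
If $p'(\ba) = 0$ for some tuple $\ba$, then $g(\ba) = c$, so $\exp(g(\ba)) = \exp(c) = k$, and hence $p(\ba) = \exp(g(\ba)) - k = 0$. This gives the second conclusion.

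For the height bound, recall the inductive construction $K[X] = R_0 \subset R_1 \subset \cdots$ with $R_{n+1} = R_n'$: an exponential polynomial lies in $R_{n+1}$ precisely when it is a ring expression in $K \cup \x \cup \exp(R_n)$. If $h = \height(p)$, then $p = \exp(g) - k \in R_h \smallsetminus R_{h-1}$; in particular $\exp(g) \in R_h$, which forces $g \in R_{h-1}$ (otherwise $\exp(g)$ would not be a ring term in $\exp(R_{h-1})$). Hence $\height(g) \le h - 1$. Since $c \in K \subset R_0$, also $\height(p') = \height(g - c) \le h - 1 < h = \height(p)$, which is exactly what we need.

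The argument is essentially mechanical given the previous lemma; the only subtlety is the height bookkeeping, and that is controlled by the fact that applying $\exp$ strictly raises the stage index in the $R_n$-chain, so stripping one $\exp$ strictly lowers the height. No step looks like a genuine obstacle — the key input (surjectivity of $\exp$) is precisely one of the properties of $K$ fixed earlier in the section, and the observation that $k \neq 0$ is forced by our reduction that $p$ is not itself of the form $\exp(g')$.
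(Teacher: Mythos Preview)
Your proof is correct and follows exactly the paper's approach: invoke the previous lemma to write $p=\exp(g)-k$ with $k\neq 0$, pick a preimage $c$ of $k$ under $\exp$, and set $p'=g-c$. You supply more detail on the height bookkeeping than the paper does (the paper simply asserts $p'$ has lower height), and you omit the paper's side remark that one may choose $c\neq 0$ using the infinite kernel, but that remark is not needed for this lemma.
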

\begin{proof}
If $V_p$ is not free, then  $p= \exp(g) -b$ for some $b \in K^*$. (We are assuming that $p\neq \exp(g)$).  We can find $
\log(b)\in  K^*, \exp(\log(b))=b$. (Note: $\exp$ is not injective, so there are non-zero elements of the kernel allowing a
non-zero choice for $\log(1)$.) Then we can find zeroes of $p' = g - \log(b)$ which is now of lower height than $p$.
\end{proof}

\begin{cor}\label{free or poly} If $V_p$ is not free, we can always find a $\hat{p}$ such that $\hat{p}(\x) = 0 \Rightarrow p(\x)=0$, and
either $V_{\hat{p}}$ is free or $\hat{p}$ is a polynomial.
\end{cor}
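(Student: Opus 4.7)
The plan is to obtain Corollary~\ref{free or poly} by simple iteration of the preceding lemma, using the height of $p$ as the induction parameter. I would induct on $\height(p)$. In the base case $\height(p)=0$, the exponential polynomial $p$ is already an ordinary polynomial, so the conclusion holds trivially by taking $\hat p := p$.

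For the inductive step, suppose the corollary is known for all exponential polynomials of height strictly less than $\height(p)$. If $V_p$ is free, there is nothing to do: set $\hat p := p$ and the corollary holds directly. Otherwise, the previous lemma produces an exponential polynomial $p'$ with $\height(p') < \height(p)$ such that $p'(\x) = 0 \Rightarrow p(\x) = 0$. By the inductive hypothesis applied to $p'$, there exists $\hat p$ with $\hat p(\x) = 0 \Rightarrow p'(\x) = 0$ and such that either $V_{\hat p}$ is free or $\hat p$ is a polynomial. Composing the two implications gives $\hat p(\x) = 0 \Rightarrow p(\x) = 0$, which is what is required.

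Since $\height$ takes values in $\N$ and strictly decreases at each application of the preceding lemma, the process terminates in finitely many steps: either we encounter a stage where the associated variety is free, or the height drops to $0$ and we are left with an ordinary polynomial. Either way the alternative stated in the corollary is realized. There is no real obstacle here; the content of the argument lies entirely in the previous two lemmas (that non-freeness forces $p = \exp(g) - b$ and that we can then strip off the outermost exponential using surjectivity of $\exp$ and the nontrivial kernel to choose $\log(b)$), and the corollary is a clean bookkeeping consequence of those.
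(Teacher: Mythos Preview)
Your proof is correct and follows essentially the same approach as the paper: the paper simply says ``iteration of this process will yield the desired result,'' and you have written out that iteration as a clean induction on $\height(p)$. The only cosmetic difference is that you (implicitly) prove the slightly stronger statement without the hypothesis ``$V_p$ is not free'' to make the induction go through, which is exactly the right move.
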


\begin{proof}
 By the previous lemma, if $V_p$ is not free, we can find an exponential polynomial of lower height, $p'$ such that
 $p'(\x)=0 \Rightarrow p(\x) = 0$.  Iteration of this process will yield the desired result.

\end{proof}

Once again, since the only polynomials with no zeroes are the constant non-zero polynomial, Theorem 1 is clearly true for polynomials
in any field with a surjective exponential map. Furthermore, if $p$ is a non-constant exponential polynomial, $\hat{p}$
will not be constant. Thus, we have now reduced to the cases where either $\hat{p}$ is free or we can find solutions to $p$ by solving polynomials.  So we need only prove the theorem for exponential polynomials $p$ where  $\height(p) \geq 1$,
$T_0$ is a refined decomposition, and $V_p$ is irreducible and free.  All that remains is to show that under these
circumstances, $V_p$ is rotund.

\section{Rotundity of $V_p$}

\begin{defn} Let $C=(c_{i,j})$ be an $r \times \alpha$ matrix of integers and let

\noindent $[C]:G_\alpha(K) \to G_r(K)$ be the
function $[C](\z,\y)= (u_1,...,u_r,v_1,...,v_r)$ where $$u_i = \sum_{j=1}^\alpha c_{i,j}z_j  \text{  and  }  v_i =
\prod_{j=1}^\alpha y_j^{c_{i,j}}.$$   An irreducible variety $V \subseteq G_\alpha(K)$ is \emph{rotund} (normal in
\cite{Markerremarks}, ex-normal in \cite{P-exp}) if $\dim([C](V))\geq r$ for any $r \times \alpha$ matrix of integers C of rank $r$ where $1\leq r\leq \alpha$.

\end{defn}

\begin{lemma}
For any exponential polynomial $p(x_1,...,x_n)$, if $V_p$ is an irreducible and free variety defined via a refined
decomposition, $V_p$ is rotund.
\end{lemma}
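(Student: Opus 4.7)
The plan is to compute $\dim[C](V_p) = \td(\bu, \bv / K)$ for a generic point of $V_p$ and show it is at least $r$ for every $r \times \alpha$ integer matrix $C$ of rank $r$. Using the birational identification of $V_p$ with the hypersurface $V_p^* := \{p^*(\x, \y) = 0\} \subseteq K^n \times (K^*)^\alpha$ of dimension $n + \alpha - 1$, a generic point has the form $(\ba, \bb)$, and, writing $C = [C_1 \mid C_2]$, its image is $\bu = C_1 \ba + C_2 \bar{q}(\ba, \bb)$, $\bv = \bb^C$, where $\bar{q}$ records the polynomials $p_{n+1}, \ldots, p_\alpha$ defining $V_p$.

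First I split on whether $p^*$ involves the $\x$-variables. If it does (Case A), then the projection $V_p^* \to (K^*)^\alpha$ is dominant, $\bb$ is algebraically independent over $K$, and since $\rank C = r$ makes the rows of $C$ $\Z$-linearly independent, the monomials $\bv = \bb^C$ are algebraically independent, giving $\td(\bv / K) = r$ at once. Otherwise (Case B) $p^* \in K[\y]$ and $W := \{p^*(\y) = 0\}$ is an irreducible hypersurface of dimension $\alpha - 1$. If $r = \alpha$ then $[C]$ is an isogeny, so $\dim[C](V_p) = \dim V_p = n + \alpha - 1 \geq \alpha$ (using $n \geq 1$). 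For $r < \alpha$, let $s = \td(\bv / K)$. A fiber-dimension argument---each fiber of the monomial map $\y \mapsto \y^C$ restricted to $W$ is contained in a coset of its kernel, which has dimension $\alpha - r$---forces $s \geq r - 1$, so $s \in \{r - 1, r\}$. The case $s = r$ is immediate. The remaining sub-case $s = r - 1$ is where the generic fiber fills an entire coset, so $W$ is a union of cosets of the kernel, forcing $p^*(\y)$ to be an irreducible factor of $g(\y^C)$ for some $g \in K[\bv]$; equivalently $p(\x) = g(\exp(\ell_1), \ldots, \exp(\ell_r))$ with $\ell_i = c_i \cdot T_0$. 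In this sub-case I must use $\bu$ to recover the missing dimension.

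To close Case B with $s = r - 1$, I show that some $u_i$ is transcendental over $K(\bv)$. Since each row $c_i$ of $C$ is nonzero and $T_0$ is $\Q$-linearly independent over $K$, each $\ell_i = c_i \cdot T_0$ is a nonzero nonconstant exponential polynomial in $\x$. This nonconstancy translates into the algebraic statement that some $\x$-monomial coefficient of $u_i$, an element of $K[\y]$, does not lie in the ideal $(p^*)$. Because $p^* \in K[\y]$, the coordinate $\ba$ is algebraically independent over $K(\bb)$ in the function field $K(V_p^*)$, so such a $u_i$ is not in $K(\bb)$ and is therefore transcendental over $K(\bb) \supseteq K(\bv)$. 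This yields $\td(\bu, \bv / K) \geq (r-1) + 1 = r$. The main obstacle is precisely this last step: verifying, in the sub-case $s = r - 1$, that the $\Q$-linear independence of $T_0$ (the refinement condition) genuinely prevents $u_i$ from being absorbed into $K(\bb)$ modulo $p^*$ for every $i$. This is where the refinement of the decomposition is essential, and it involves tracking the structure of the polynomial representatives $\bar{q}$ of the $T$-bricks together with the nonzero constants from the rows of $C$.
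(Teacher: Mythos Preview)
Your Case~A is fine, but the argument in Case~B, sub-case $s=r-1$, has a genuine gap. You claim that the nonconstancy of $\ell_i=c_i\cdot T_0$ forces some positive-degree $\x$-coefficient of $u_i$ to lie outside $(p^*)$, hence $u_i\notin K(\bb)$. This inference is false: nonconstancy of $\ell_i$ only tells you that $u_i$ is a nonconstant element of $K[\x,\y]$, not that it depends on $\x$. Concretely, take $n=1$, $T_0=\{x_1,\;t_2=\exp(x_1),\;t_3=\exp(x_1)+\exp(\exp(x_1))\}$ (so $p_2=y_1$, $p_3=y_1+y_2$), $p^*=y_2+y_3-1$, and
\[
C=\begin{pmatrix}0&1&0\\0&0&1\end{pmatrix}.
\]
This is a legitimate refined decomposition with $V_p$ irreducible and free, and we are in your Case~B with $s=r-1=1$. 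Here $u_1=p_2=y_1$ and $u_2=p_3=y_1+y_2$ both lie in $K(\bb)$, so \emph{no} $u_i$ is transcendental over $K(\bb)$. Your route through $K(\bb)$ therefore cannot close the argument. (The desired conclusion still holds, since $u_1=y_1$ is transcendental over $K(\bv)=K(y_2)$; but you have given no mechanism to detect this.)

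The paper's proof avoids this difficulty by not separating the additive and multiplicative sides. It bounds the generic fibre of $[C]$ by comparing it with the variety $V_2\subseteq K^n\times(K^*)^\alpha$ cut out by the $r$ multiplicative equations $\prod y_j^{c_{i,j}}=b_i$ alone (so $\dim V_2=n+\alpha-r$), and shows the fibre is a \emph{proper} subvariety of $V_2$. To do this it picks a generic point $(\br,\bh)$ of $V_2$, lets $\beta$ be maximal with $(\br,h_1,\dots,h_\beta)$ algebraically independent, and observes that the first multiplicative dependence among $h_1,\dots,h_{\beta+1}$ comes from an integer combination of the rows of $C$, say with coefficients $d_j$. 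The \emph{same} integer combination of the additive fibre equations reads
\[
d_1x_1+\cdots+d_nx_n+d_{n+1}p_{n+1}(\x)+\cdots+d_{\beta+1}p_{\beta+1}(\x,y_1,\dots,y_\beta)=\text{const},
\]
and the refinement of $T_0$ makes the left side a nonconstant polynomial in the algebraically independent coordinates $(\br,h_1,\dots,h_\beta)$; hence the generic point of $V_2$ fails this equation and is not in the fibre. The key point your argument misses is that one must choose the linear combination \emph{adapted to the first multiplicative dependence}, so that the resulting additive equation involves only coordinates that are still generic; working row by row, as you do, loses this synchronisation.
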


\begin{proof}

Let $C=(c_{i,j})$ be an $r \times \alpha$ matrix of integers of rank $r$. To prove this lemma, we will use the fiber
dimension theorem (see \cite{Shafarevich}) which tells us that $\dim ([C] (V_p)) = \dim(V_p)- \dim(\fib(d))$ where $\fib(d) = [C]^{-1}(d)$ and $d$ is a generic point in $[C](V_p)$.

By simply counting the number of equations by which $V_p$ is defined, we know that $\dim(V_p) \geq 2\alpha - (\alpha -
n) -1 = \alpha +n -1$.  Let $\ba\bb = (a_1,...,a_r,b_1,...,b_r)$ be a generic point in $[C](V_p)$.  By the fiber dimension
theorem, we know that $\dim(V_p) - \dim(\fib(\ba\bb)) = \dim ([C] (V_p)) $.  So it suffices to show that
$\dim(\fib(\ba\bb)) < \alpha + n - 1 - r$.

\medskip
\medskip
Consider the equations that define the fiber, $\mF$:

\noindent We have for $i=n+1,...,\alpha$, $$p_i(\x,y_1,...,y_{i-1}) = w_i$$ and we have $$p^*(\x,\y)=0$$ and for each $
j=1,...,r$ we have \begin{equation} c_{1,j}x_1+...+ c_{n,j}x_n+ c_{n+1,j}w_{n+1}+,...,c_{\alpha,j}w_{\alpha}=a_j\end{equation} and
\begin{equation}\label{mult} y_1^{c_{1,j}}\cdots y_\alpha^{c_{\alpha,j}}=b_j\end{equation}

\medskip

Consider the projection $\phi: G_\alpha(K) \to K^n \times (K^*)^{\alpha}$ defined by $\phi(\x,\bw,\y) = (\x,\y)$. Since $\phi \upharpoonleft V_p$ is an isomorphism, we know $\dim(\mF) = \dim(\phi(\mF))$. Let  $V_2 \subset \phi(G_\alpha(K))$ be the variety given by the multiplicative equations in $(2)$.  Since $V_2$ is defined by $r$ independent multiplicative equations, $\dim(V_2) = \alpha + n - r$.  If $F_0$ is the field of definition of $V_p$, it suffices to show that $\dim(\phi(\mF)) \lneq \dim(V_2)$, i.e., if $(\br,\bh)$ is a generic point of $V_2$ over $F_0(\ba\bb)$, $(\br,\bh) \notin \phi(\mF)$.

Let $(\br,\bh)$ be a generic point of $V_2$ and let $\beta$ be the maximum such that $(\br,h_1,...,h_\beta)$ is algebraically independent. So $(h_1,..., h_{\beta +1})$ are algebraically dependent. Therefore, there is some tuple  of integers $\overline{\lambda}$ and
 integers $d_j = \sum \lambda_j c_{i,j}$ and multiplicative equation $$ \prod_{j=1}^{\beta + 1} y_j^{d_j}= \prod_{j=1}^{\beta + 1}b_j^{\lambda_j}$$ which is satisfied in $V_2$ so it must also be satisfied by $(h_1,..., h_{\beta +1})$.  Now consider the linear equation which must also be satisfied in $\fib(\ba\bb)$: $$d_1x_1+...+d_nx_n +
 d_{n+1}p_{n+1}(\x)+...+d_{\beta +1}p_{\beta+1}(\x,y_1,...,y_\beta)= \sum_{j=1}^{\beta + 1} \lambda_j a_j$$ The left-hand side of this equation is a nonconstant polynomial in the variables $\x,y_1,...,y_\beta$ because the $T_0$-bricks are $\Q$-linearly independent over $K$.  However, this equation must be satisfied in $\fib(\ba\bb)$.  Thus $(\br,\bh) \notin \fib(\ba\bb)$.

\end{proof}

\noindent We are now done.  This proof yields the following corollary:

\begin{cor}
Suppose $p(x) \in \mK[x]^E$ and $p(x)=0$ has exactly $m>0$ many solutions for some $m\in \N$.  Then there are $a_1,...,a_m \in \mK, n_1,...,n_m \in \N$ and an exponential polynomial $g$ such that $$p(x) = (x-a_1)^{n_1}\cdots (x-a_m)^{n_m}\exp(g).$$

\end{cor}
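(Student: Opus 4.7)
The plan is to argue by induction on the total polynomial degree of $p^*$, where $T_0$ is a refined decomposition of $p$ and $p = p^*(\x, \exp(\x, T_0))$. For the base case, if $p$ has height zero then $p$ is an ordinary polynomial in $x$, and the fundamental theorem of algebra over the algebraically closed $\mK$ gives $p = c \prod (x-a_i)^{n_i}$; writing $c = \exp(g_0)$ via surjectivity of $\exp$ on $\mK$ furnishes the required constant exp poly $g = g_0$. The $m=0$ extension of the statement, needed at interior stages of the induction, is immediate from Theorem~\ref{Thm1}: a nonzero exp poly with no roots is $\exp(g)$, with empty polynomial factor.

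For the inductive step, assume $p$ has height at least one and exactly $m$ roots with $0 < m < \infty$. The key claim is that $p$ must be reducible with respect to $T_0$. If $p$ were irreducible, Lemma~\ref{irreducible lemma} would give $V_p$ irreducible, the Rotundity Lemma would give $V_p$ rotund, and if additionally $V_p$ is free then Axiom~\ref{EC} combined with the key proposition would yield infinitely many roots of $p$, contradicting $m < \infty$. If $V_p$ is not free, the non-freeness lemma gives $p = \exp(g) - k$ for some exp poly $g$ and some $k \in \mK^*$ (nonzero, since $p$ has a root and $\exp$ does not vanish). If $g$ were constant then $p$ would be a nonzero constant with no roots, contradicting $m > 0$; so $g$ is non-constant. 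Fixing any $\log(k) \in \exp^{-1}(k)$, the roots of $p$ are $\{x \in \mK : g(x) \in \log(k) + \ker\exp\}$, and for each $\tau \in \ker\exp$ Theorem~\ref{Thm1} applied to $g - \log(k) - \tau$ yields either a root of $p$ or an identity $g - \log(k) - \tau = \exp(h_\tau)$. A sub-claim (below) rules out the latter for more than one $\tau$, so the infinitely many remaining $\tau \in \ker\exp$ contribute distinct roots, again contradicting $m$ finite. Hence $p = q_1 q_2$ with $q_i$ nonconstant exp polys of strictly smaller polynomial degree; since $\mK$ is a domain, each $q_i$ has finitely many roots (a subset of those of $p$), so the inductive hypothesis yields factorizations of each $q_i$, and combining gives the desired factorization of $p$.

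The main obstacle is the sub-claim: if $\exp(h_1) - \exp(h_2) = c$ in $\mK[x]^E$ for some nonzero $c \in \mK$, then $h_1$ and $h_2$ are both constants. If $h_1, h_2$ were $\Q$-linearly independent over $\mK$, the construction of $\mK[x]^E$ would make $\exp(h_1), \exp(h_2)$ algebraically independent over $\mK$, contradicting the polynomial relation. Otherwise $h_1 = \lambda h_2 + \mu$ for some $\lambda \in \Q^*, \mu \in \mK$; writing $\lambda = r/s$ and $v = \exp(h_2/s)$, substitution and clearing denominators yields a nontrivial polynomial equation in $v$ with coefficients in $\mK$. So $v$ is algebraic over $\mK$ inside the integral domain $\mK[x]^E$, and since $\mK$ is algebraically closed, $v \in \mK$; this forces $h_2/s$ (and hence $h_2, h_1$) to be constants. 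Applying this to the $h_i$'s from two putatively exceptional $\tau$'s forces $g$ itself to be constant, contradicting the non-constant case and completing the reducibility argument.
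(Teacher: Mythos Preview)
Your proof is correct and follows essentially the same route as the paper's: reduce to an irreducible factor with respect to $T_0$, note that if $V_p$ is free then rotundity plus Axiom~\ref{EC} give infinitely many roots, while if $V_p$ is not free then $p=\exp(g)-k$ again has infinitely many roots; hence every irreducible factor contributing only finitely many roots has height zero and is therefore linear. The paper states this as a one-shot factorization into irreducibles, whereas you phrase it as an induction on $\deg p^*$, but the two packagings are interchangeable.

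The one place where you genuinely add content is the $\exp(g)-k$ case. The paper simply asserts that ``there are infinitely many choices for $\log(k)$ and thus infinitely many zeros,'' leaving unexplained why each (or almost every) choice of $\log k$ actually yields a root of $g-\log k$. You close this by applying Theorem~\ref{Thm1} to each $g-\log(k)-\tau$ and using your sub-claim to show that $g-\log(k)-\tau=\exp(h_\tau)$ can hold for at most one $\tau\in\ker\exp$, since two such identities would force $\exp(h_{\tau_1})-\exp(h_{\tau_2})$ to be a nonzero constant and hence $g$ itself to be constant. The sub-claim is correct: the algebraic-independence assertion in the $\Q$-linearly independent case is a standard feature of the free E-polynomial ring construction, and your treatment of the dependent case via $v=\exp(h_2/s)$ and algebraic closedness of $\mK$ is sound. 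So your version is somewhat more complete than the paper's on this point, while otherwise matching it.
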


\begin{proof}
Since $p$ has a zero, $p \neq \exp(g)$ for any exponential polynomial $g$.  Let $V_p$ be a variety given by a refined decomposition of $p$. We've shown that if $V_p$ is irreducible and free, then $V_p$ is rotund and $p$ has infinitely many solutions. Furthermore, if $p$ has only finitely many solutions, every factor of $p^*$ can lead to only finitely many solutions.  So it suffices to consider $V_p$ irreducible but not free.   Notice that if $p = \exp(g) - k$ for some $k\in \mK$, there are infinitely many choices for $\log(k)$ and thus infinitely many zeros.  So $p$ must be a polynomial.  (We excluded this case on page $3$.) The only irreducible polynomials with finitely many solutions are lines.
\end{proof}

\end{document}